\documentclass[letterpaper, 10 pt, conference]{ieeeconf} 
\IEEEoverridecommandlockouts                              
 
\usepackage[utf8]{inputenc}  
  
\usepackage{mathtools}
\mathtoolsset{showonlyrefs,showmanualtags}
\usepackage{amssymb,amsthm}
\usepackage{dsfont}
\usepackage{booktabs} 
\usepackage{color}
\usepackage{bbold} 
\usepackage[yyyymmdd,hhmmss]{datetime}
\usepackage{bm,upgreek}
\usepackage{tablefootnote}
\usepackage{nth}
\usepackage{pgfplots}
\usepackage{tikz}
\usetikzlibrary{shapes,snakes}
\usetikzlibrary{patterns}
\usepackage{multirow}
\usepackage{footnote}

\usepackage{nth}

\usepackage[ruled]{algorithm} 
\usepackage{algpseudocode}

\newtheorem{theorem}{Theorem}

\newtheorem{lemma}{Lemma}

\newcommand{\Exp}[1]{\mathbb{E}[ #1]}
\newcommand{\Prob}[1]{\mathbb{P}(#1)} 
\newcommand{\ID}[1]{ \mathbb{1} (#1 ) }
\newcommand{\B}{ \mathtt{blank} }

\newcommand{\TM}{T_{\scriptstyle \mathsf{m}}}

\DeclareMathOperator{\Binopdf}{binopdf}

\usepackage{xcolor}

\begin{document}

\title{Near-Optimal Design for Fault-Tolerant Systems with Homogeneous Components under Incomplete Information}
\author{Jalal Arabneydi and Amir G. Aghdam
\thanks{ This work has been supported in part by the Natural Sciences and Engineering Research Council of Canada (NSERC) under Grant RGPIN-262127-17, and in part by Concordia University under Horizon Postdoctoral Fellowship.}  
\thanks{Jalal Arabneydi and Amir G. Aghdam are with the  Department of Electrical and Computer Engineering, 
        Concordia University, 1455 de Maisonneuve Blvd. West, Montreal, QC, Canada, Postal Code: H3G 1M8.  Email:{\small jalal.arabneydi@mail.mcgill.ca},
          and 
        {\small aghdam@ece.concordia.ca.}}%
}

\markboth{}%
{Arabneydi \MakeLowercase{\textit{et al.}}: }

\maketitle

\vspace*{-5cm}{\footnotesize{Proceedings of IEEE  International Midwest Symposium on Circuits and Systems, 2018.}}
\vspace*{3.85cm}

 \begin{abstract}
In this paper, we study a fault-tolerant control for systems consisting of multiple homogeneous components such as parallel processing machines. This type of system is often more robust to uncertainty compared to those with a single component.  The state of each component is either in the operating mode or faulty.   At any time instant, each component may independently become faulty according to a Bernoulli probability distribution. If a component is faulty, it remains so  until it is fixed.  The objective is to design a fault-tolerant system by sequentially choosing one of the following three options: (a)  do nothing  at zero cost; b)  detect the number of faulty components at the  cost of inspection, and c) fix the system  at  the  cost of repairing faulty components.   A Bellman equation is developed  to identify  a near-optimal solution for the problem. The efficacy of the proposed solution is verified by numerical simulations.
\end{abstract}
\section{Introduction}

In the design of control systems for industrial applications, it is important to achieve a certain level of fault tolerance. There has been a growing interest in the literature recently on developing effective fault tolerant paradigms for reliable control of real-world systems \cite{Frank2004}. This type of control system is particularly useful when the system is subject to unpredictable failures.  Recent applications of fault-tolerant control include power systems and aircraft flight control systems \cite{odgaard2014fault,Verhaegen2010}.

It is known that certain class of faults can be modeled as partially observable Markov decision processes (POMDP). Various methods are studied in the literature to find an approximate solution to POMDP. Grid-based methods are used in \cite{lovejoy1991computationally} to compute an approximate value function at a fixed number of points in the belief space, and then interpolate over the entire space. The advantage of such approaches is that their computational complexity remains unchanged at each iteration and does not increase with time. Their drawback, however, is that the fixed points may not be reachable. In point-based methods \cite{Shani2013survey}, the reachability drawback is circumvented by restricting attention to the reachable set. Using the notion of $\alpha$-vectors, an approximate value function is obtained iteratively over a finite number of points in the reachable set. In these methods, the points are not fixed and may change with the value function. In policy-search methods such as finite-state controllers \cite{hansen1998finite}, on the other hand, attention is devoted to a certain class of strategies, and the objective is to find the best strategy in that class using policy iteration and gradient-based techniques. For more details on POMDP solvers, the interested reader is referred to \cite{Cassandra1998survey, Shani2013survey,Murphy2000survey}, and references therein.

In this paper, a fault-tolerant scheme is proposed for a system consisting of a number of homogeneous components, where each component may fail with a certain probability. Three courses of action are defined to troubleshoot the faulty system: (i) let the system operate with faulty components; (ii) inspect the system, and (iii) repair the system. Each course of action has an implementation cost. The problem is formulated as a POMDP but since finding an optimal solution for this problem is intractable, in general, we are interested in seeking a near-optimal solution for it~\cite{madani2003undecidability}. However, identifying an $\epsilon$-optimal solution for this problem is also NP-hard \cite{meuleau1999solving}. To overcome this hurdle, we exploit the structure of the problem to use a different information state (that is smaller than the belief state). The computational complexity of the proposed solution is logarithmic with respect to the desired neighborhood $\epsilon$, and polynomial with respect to the number of components. To derive some of the results, we use some methods developed in \cite{JalalACC2018,arabneydi2016new}.

This paper is organized as follows. The problem is formally stated in Section~\ref{sec:problem} and the main results of the work are presented in the form of three theorems in Section~\ref{sec:main}. Two numerical examples are provided in Section~\ref{sec:examples}  and some concluding remarks are given in Section~\ref{sec:conclusions}.

\section{Problem Formulation}\label{sec:problem}
Throughout this paper, $\mathbb{R}$ and $\mathbb{N}$  refer, respectively,  to  real and natural numbers. For any $n \in \mathbb{N}$,  $\mathbb{N}_n$  denotes the finite set $\{1,\ldots,n\}$. Moreover,  $\Prob{\boldsymbol \cdot}$   is the probability of a random variable,  $\Exp{\boldsymbol \cdot}$ is the expectation of a random variable, and $\ID{\boldsymbol \cdot }$  is the indicator function. The shorthand notation $x_{a:b}$  denotes vector $(x_a,\ldots,x_b)$, $a,b \in \mathbb{N}, a \leq b$. For any finite set $\mathcal{X}$, $\mathcal{P}(\mathcal{X})$ denotes the space of probability measures on $\mathcal{X}$. For any $y,n \in \mathbb{N}$, $y \leq n$,  and $p \in [0,1]$,  $\Binopdf(y,n,p)$ is the  Binomial probability distribution function of  $y$ successful outcomes from  $n$ trials where the success  probability is~$p$. 

Consider a stochastic dynamic system consisting of $n \in \mathbb{N}$  internal components. Denote by  $x^i_t \in \mathcal{X}:=\{0,1\}$  the state of component $i \in \mathbb{N}_n$ at time $t \in \mathbb{N}$, where $x^i_t=0$ means  that  the $i$-th component is in  the operating mode and $x^i_t=1$ means that  it is faulty. If a component is faulty, it  remains so until it is  repaired.  Let $p \in [0,1]$  denote the probability that a component becomes faulty at any time $t \in \mathbb{N}$. It is assumed that  the probability of   failure of each component is independent of  others.  

 Denote by $m_t=\sum_{i=1}^n x^i_t \in \mathcal{M}:= \{0,1,\ldots,n\}$ the number of faulty components at time $t \in \mathbb{N}$, and note that the state of each component may not be directly available.  Let $o_t \in \mathcal{O}= \mathcal{M} \cup \{\B\}$ denote the number of faulty components at  time $t \in \mathbb{N}$ that are observed. If there is no observation at time $t \in \mathbb{N}$,  then $o_t=\B$.  Initially,    the system  is assumed to have  no faulty components, i.e.  $o_1=0$. 
 
At  any time $t \in \mathbb{N}$, we have  three different options (actions) at our disposal, represented by $u_t \in \mathcal{U}$, where $\mathcal{U}=\mathbb{N}_3$ is the action set.  The first option is to do nothing and let the system continue operating without disruption at no implementation cost.   In this case,   no new  information on the  number  of faulty components is collected, i.e.,
\begin{equation}\label{eq:o_1}
\Prob{o_{t+1}=\B | m_t=m, u_t=1}=1, \quad m \in \mathcal{M},t \in \mathbb{N}.
\end{equation}
The second option is to  inspect the system and  detect the number of faulty components at some inspection cost, where 
\begin{equation}\label{eq:o_2}
\Prob{o_{t+1}=m | m_{t+1}=m, u_t=2}=1, \quad m \in \mathcal{M},  t \in \mathbb{N}.
\end{equation}
 The third option  is to repair the faulty components   at a cost depending on the number of  them, i.e. $m_t$.   Therefore,   at any time $t \in \mathbb{N}$,  the  following relations hold:
\begin{align}\label{eq:relation_prob}
\Prob{x^i_{t+1}=1 | x^i_t=0, u_{t} \in \{ 1,2\}}&=p, \nonumber \\
\Prob{x^i_{t+1}=1 | x^i_t=1, u_{t} \in \{ 1,2\}}&=1, \nonumber \\
\Prob{x^i_{t+1}=1 | x^i_t=0, u_{t}=3}&=0, \nonumber \\
\Prob{x^i_{t+1}=1 | x^i_t=1, u_{t}=3}&=0.
\end{align}

Let $c(m,u): \mathcal{M}\times \mathcal{U} \rightarrow \mathbb{R}$ denote the cost associated with action $u \in \mathcal{U}$ when the number of faulty  components is $m \in \mathcal{M}$. The strategy $g$  is defined as  the mapping from the available information by time  $t$ to an action in $\mathcal{U}$, i.e.,
\begin{equation}
u_t=g(o_{1:t},u_{1:t-1}).
\end{equation}

The objective is  to  develop a cost-efficient fault-tolerant strategy in  the sense that the system operates with a  relatively small number of faulty components, taking the inspection and repair costs into account. To this end,  given the discount factor $\beta \in (0,1)$, we define the following  cost:
\begin{equation}\label{eq:total_cost}
J(g)=\Exp{\sum_{t=1}^\infty \beta^{t-1} c(m_t,u_t)}.
\end{equation}
\section{Main Result}\label{sec:main}
To present the main result of this paper, we first  derive a Bellman equation  to identify  the optimal solution. Since  the corresponding   Bellman equation  involves  an intractable optimization problem,  we  subsequently present an alternative  Bellman equation that is tractable and  provides a near-optimal solution. For any  $m \in \mathcal{M} \backslash \{0,n\}$, define  the following  vector-valued function $\phi: \mathcal{M} \backslash \{0,n\} \rightarrow \mathcal{P}(0,1,.\ldots,n)$:
\begin{equation}
\phi(m, \boldsymbol \cdot):=\Binopdf(\boldsymbol \cdot, m,1) \ast \Binopdf(\boldsymbol \cdot, n-m,p),
\end{equation}
where  $\ast$ denotes the convolution operator.
\begin{theorem}\label{thm:1}
 Given any realization $m_{1:t}$ and $u_{1:t}$, $t \in \mathbb{N}$, the transition probability matrix of the number of faulty components can be computed as follows:
 \begin{equation}
 \Prob{m_{t+1}|m_{1:t},u_{1:t}}= \Prob{m_{t+1}|m_{t},u_{t}},
 \end{equation}
 where for any $y \in \{0,1,\ldots,n\}$,
\begin{itemize}
\item if  $m_t=0$ and $u_t \in \{1,2\}$, then 
\[\Prob{m_{t+1}=y|m_t,u_t}= \Binopdf(y,n,p),\]
\item if  $m_t=n$ and $u_t \in \{1,2\}$, then 
\[\Prob{m_{t+1}=y|m_t,u_t}= \Binopdf(y,n,1)= \ID{y=n},\]
\item if  $m_t \notin \{0,n\}$ and $u_t \in \{1,2\}$, then 
\[\Prob{m_{t+1}=y|m_t,u_t }= \phi(m_t, y+1),\]
\item if   $u_t=3$, then 
\[\Prob{m_{t+1}=y|m_t,u_t }= \ID{y=0}.\]
\end{itemize}
\end{theorem}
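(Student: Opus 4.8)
The plan is to establish the identity in two steps. First I would show that, conditioned on the current configuration $x_t$ and the current action $u_t$, the law of $m_{t+1}$ does not depend on any earlier variable; then I would compute this law explicitly and observe that it depends on $x_t$ only through the count $m(x_t):=\sum_{i=1}^n x^i_t$, which equals $m_t$, so that marginalizing over $x_t$ is immediate and reproduces the four stated formulas.

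For the first step, recall that $u_t=g(o_{1:t},u_{1:t-1})$ is a deterministic function of the history and that, by \eqref{eq:o_1}--\eqref{eq:o_2}, each observation $o_s$ (for $s\ge 2$) is a deterministic function of $(m_s,u_{s-1})$, with $o_1=0$ fixed; hence conditioning on $(m_{1:t},u_{1:t})$ is coarser than conditioning on $(x_{1:t},o_{1:t},u_{1:t})$, which in turn is the same as conditioning on $(x_{1:t},u_{1:t})$. By \eqref{eq:relation_prob} and the assumed independence of the component failures, the conditional distribution of $x_{t+1}$ given the whole history factors as $\prod_{i=1}^n\Prob{x^i_{t+1}=\cdot\mid x^i_t,u_t}$ and therefore depends only on $(x_t,u_t)$; since $m_{t+1}=\sum_i x^i_{t+1}$, the same is true of the conditional distribution of $m_{t+1}$. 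Applying the tower property then yields
\[
\Prob{m_{t+1}=y\mid m_{1:t},u_{1:t}}=\sum_{x:\, m(x)=m_t}\Prob{m_{t+1}=y\mid x_t=x,u_t}\,\Prob{x_t=x\mid m_{1:t},u_{1:t}},
\]
where the sum ranges only over configurations with $m(x)=m_t$ because $m_t$ is part of the conditioning.

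For the second step, I would read $\Prob{m_{t+1}=y\mid x_t=x,u_t}$ off \eqref{eq:relation_prob}. If $u_t=3$, every component is reset to $0$, so $m_{t+1}=0$ almost surely and the probability is $\ID{y=0}$. If $u_t\in\{1,2\}$, the $m(x)$ faulty components stay faulty while the $n-m(x)$ operating ones fail independently, each with probability $p$, so $m_{t+1}=m(x)+Z$ with $Z$ binomial over $n-m(x)$ trials and success probability $p$; this gives $\Binopdf(y,n,p)$ when $m(x)=0$, $\Binopdf(y,n,1)=\ID{y=n}$ when $m(x)=n$, and the distribution of $m(x)+Z$ otherwise. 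Since $\Binopdf(\boldsymbol \cdot,m,1)$ is the point mass at $m$, its convolution with $\Binopdf(\boldsymbol \cdot,n-m,p)$ is exactly the distribution of $m+Z$, i.e.\ $\phi(m(x),\boldsymbol \cdot)$, the shift $y\mapsto y+1$ in $\phi(m_t,y+1)$ being the offset between the value $y\in\{0,\ldots,n\}$ and its position in the probability vector. The decisive observation is that in each case this quantity depends on $x$ only through $m(x)$.

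Substituting this into the displayed marginalization, the first factor is constant over the support $\{x:\,m(x)=m_t\}$ of the conditional law of $x_t$ --- equal to the value just computed with $m(x)=m_t$ --- so it comes out of the sum and the remaining probabilities sum to $1$; this gives simultaneously the Markov reduction $\Prob{m_{t+1}\mid m_{1:t},u_{1:t}}=\Prob{m_{t+1}\mid m_t,u_t}$ and the four explicit transition probabilities. I expect the only delicate point to be the conditional-independence bookkeeping in the first step --- checking that the observation-and-action feedback carried by $(o_{1:t},u_{1:t})$ does not destroy the property $m_{t+1}\perp(\text{history before }t)\mid(x_t,u_t)$ --- while the remainder is a routine computation with binomial distributions.
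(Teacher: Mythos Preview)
Your proposal is correct and follows essentially the same route as the paper: condition on the full configuration $\mathbf{x}_t$, decompose $m_{t+1}$ into the contribution of the currently faulty components (which stay faulty under $u_t\in\{1,2\}$) and the currently operating ones (which fail independently with probability $p$), observe that the resulting law depends on $\mathbf{x}_t$ only through $m_t$, and then marginalize. The paper packages the decomposition via the auxiliary variables $\hat x^i_{t+1}=x^i_t x^i_{t+1}$ and $\tilde x^i_{t+1}=(1-x^i_t)x^i_{t+1}$ and is terser about passing from conditioning on $\mathbf{x}_{1:t}$ to conditioning on $m_{1:t}$, whereas you spell out that step with the tower property; the underlying argument is the same.
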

\begin{proof}
 Define $\hat x^i_{t+1}:= x^i_t x^i_{t+1}$ and  $\tilde x^i_{t+1}:= (1-x^i_t) x^i_{t+1}$, $i \in \mathbb{N}_n, t \in \mathbb{N}$.  Define also $\mathbf x_t:=(x^1_t,\ldots,x^n_t)$, $t \in \mathbb{N}$. Then, given any realization $\mathbf x_{1:t}$ and $u_{1:t}$,  one has
\begin{align}\label{eq:proof_thm1}
\Prob{m_{t+1} | \mathbf x_{1:t}, u_{1:t}}&=\Prob{\sum_{i=1}^n x^i_{t+1} | \mathbf x_{t}, u_{t}} \nonumber \\
&= \Prob{\sum_{i=1}^n \hat x^i_{t+1}  + \sum_{i=1}^n \tilde x^i_{t+1}| \mathbf x_{t}, u_{t}}.
\end{align}
On the other hand, one can conclude  from the above definitions that  $(n-m_t)$ terms of $ \sum_{i=1}^n \hat x^i_{t+1}$  as well as $m_t$ terms of $\sum_{i=1}^n \tilde x^i_{t+1}$ are definitely zero. It is also important to note that $\hat x^i_{t+1}$ and $\tilde x^i_{t+1}$,  $t \in \mathbb{N}, i \in \mathbb{N}_n$, are independent Bernoulli random variables with success probability $x^i_t \cdot \Prob{x^i_{t+1}=1|x^i_t=1,u_t}$ and  $(1-x^i_t) \cdot \Prob{x^i_{t+1}=1|x^i_t=0,u_t}$, respectively. Therefore, the right-hand side of~\eqref{eq:proof_thm1} is the probability of the sum  of $m_t$  i.i.d.  Bernoulli random variables with  success probability $\Prob{x^i_{t+1}=1|x^i_t=1,u_t}$ and  $(n-m_t)$ i.i.d.  Bernoulli random variables with  success probability $\Prob{x^i_{t+1}=1|x^i_t=0,u_t}$.  Since the random variables are independent, the probability of their sum is equal to  the convolution of their probabilities. The proof follows from~\eqref{eq:relation_prob}, on noting that $m_{1:t}$ can be represented by $\mathbf{x}_{1:t}$.
\end{proof}

Let $s_t \in \mathcal{M}$ be the last observation before $t \in \mathbb{N}$ that is not \emph{blank} and $z_t \in \mathbb{N} \cup \{0\}$ be the elapsed time associated with it, i.e., the time interval between  the observation of $s_t$ and~$t$.
\begin{lemma}\label{lemma:hat-f}
There exists a function $\hat f$ such that
\begin{equation}
(s_{t+1},z_{t+1})=\hat f(s_t,z_t,u_t,o_{t+1}), \quad t \in \mathbb{N},
\end{equation} 
where 
\begin{equation}
\hat f(s_t,z_t,u_t,o_{t+1}):=\begin{cases}
(s_t, 1+z_t), & u_t=1,\\
(o_{t+1},0), & u_t=2,\\
(0,0), & u_t=3.
\end{cases}
\end{equation}
\end{lemma}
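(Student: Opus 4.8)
The plan is to prove the claim by a direct case analysis on the action $u_t \in \mathcal{U} = \{1,2,3\}$, wrapped inside a short induction that simultaneously establishes that $s_t$ and $z_t$ are well defined for every $t \in \mathbb{N}$. The base case is supplied by the standing assumption $o_1 = 0$: the latest non-blank observation is $o_1 = 0$ with zero elapsed time, so $(s_1,z_1)=(0,0)$. The inductive step is precisely the verification of the three branches of $\hat f$, and since none of those branches ever returns a blank first coordinate, well-definedness propagates.

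For the inductive step I would argue as follows. If $u_t = 1$, then \eqref{eq:o_1} gives $o_{t+1} = \B$ almost surely, so no new non-blank observation is collected; hence the latest non-blank observation is unchanged and its elapsed time increases by one, i.e.\ $(s_{t+1},z_{t+1}) = (s_t,\,1+z_t)$. If $u_t = 2$, then \eqref{eq:o_2} gives $o_{t+1} = m_{t+1} \in \mathcal{M}$, a fresh non-blank observation, which becomes the latest one with elapsed time zero, so $(s_{t+1},z_{t+1}) = (o_{t+1},0)$. If $u_t = 3$, then the last two identities of \eqref{eq:relation_prob} force $x^i_{t+1} = 0$ for all $i \in \mathbb{N}_n$, whence $m_{t+1} = 0$ deterministically; regarding this certain knowledge of $m_{t+1}$ as an implicit observation of $0$ at time $t+1$ gives $(s_{t+1},z_{t+1}) = (0,0)$. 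These are exactly the three cases defining $\hat f$.

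The only non-mechanical point, and where I expect the subtlety to lie, is the $u_t = 3$ branch: one must be comfortable interpreting the deterministic reset produced by a repair as though it were an observation, i.e.\ viewing the pair $(s_t,z_t)$ as a summary of the last time instant at which the number of faulty components is known with certainty rather than merely the last \emph{inspection} time. This is the same convention that legitimizes the initialization $o_1 = 0$, so it is applied uniformly. With that understood, the lemma follows immediately from \eqref{eq:o_1}, \eqref{eq:o_2} and \eqref{eq:relation_prob}; no probabilistic or approximation argument is needed, and in particular Theorem \ref{thm:1} is not invoked here (it will be used later, when $(s_t,z_t)$ is shown to serve as an information state).
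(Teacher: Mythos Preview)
Your proposal is correct and follows essentially the same approach as the paper, which simply states that the result follows from the definitions of $s_t$ and $z_t$ together with equations~\eqref{eq:o_1}, \eqref{eq:o_2} and~\eqref{eq:relation_prob}. You have merely expanded this one-line justification into an explicit case analysis, including the natural reading of the $u_t=3$ branch as a deterministic reset; nothing in your argument departs from the paper's reasoning.
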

\begin{proof}
The proof follows from the definition of $s_t$ and $z_t$, and equations~\eqref{eq:o_1},~\eqref{eq:o_2} and~\eqref{eq:relation_prob}.
\end{proof}
 For the sake of  simplicity, denote by $\TM$ the transition probability matrix of the number of faulty components under actions  $\{1,2\}$ given by Theorem~\ref{thm:1}, i.e., for any $m',m \in \mathcal{M}$ and  $t \in \mathbb{N}$,
\begin{equation} \label{eq:tm}
\TM(m',m):=\Prob{m_{t+1}=m' | m_t=m}.
\end{equation} 

\begin{lemma}\label{lemma:prob_o}
Given any realization $s_{1:t}$, $z_{1;t}$, $o_{1:t}$ and $u_{1:t}$, $t \in \mathbb{N}$, the following equality  holds irrespective of strategy~$g$,
\begin{multline}\label{eq:prob_o}
\Prob{o_{t+1} | s_{1:t}, z_{1:t}, o_{1:t},u_{1:t}}= \ID{u_t=1, o_{t+1}=\B}\\
\ID{u_t=2} \TM^{1+z_t}(o_{t+1},s_t)+ \ID{u_t=3,o_{t+1}=0}.
\end{multline}
\end{lemma}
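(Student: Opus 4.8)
The plan is to condition on the value of $u_t$ and treat the three actions separately, since the observation mechanism \eqref{eq:o_1}--\eqref{eq:o_2} together with the effect of repair in \eqref{eq:relation_prob} is action-dependent and, for $u_t\in\{1,3\}$, deterministic. If $u_t=1$, equation~\eqref{eq:o_1} gives $o_{t+1}=\B$ with probability one, so the left-hand side of \eqref{eq:prob_o} equals $\ID{o_{t+1}=\B}$; if $u_t=3$, the last two lines of \eqref{eq:relation_prob} force $x^i_{t+1}=0$ for every $i\in\mathbb{N}_n$, hence $m_{t+1}=0$, and the reported value is $o_{t+1}=0$ with probability one (consistently with Lemma~\ref{lemma:hat-f}, which sets $s_{t+1}=0$, $z_{t+1}=0$), so the left-hand side equals $\ID{o_{t+1}=0}$. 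In each of these two cases the other two terms on the right-hand side of \eqref{eq:prob_o} vanish, so the identity holds trivially.

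The substantive case is $u_t=2$. Here \eqref{eq:o_2} gives $o_{t+1}=m_{t+1}$ with probability one, so it suffices to show $\Prob{m_{t+1}=y \mid s_{1:t},z_{1:t},o_{1:t},u_{1:t}} = \TM^{1+z_t}(y,s_t)$ for every $y\in\mathcal{M}$. First I would use the definitions of $s_t$ and $z_t$ together with Lemma~\ref{lemma:hat-f} to observe that, writing $\tau:=t-z_t$, the conditioning event deterministically fixes $m_\tau=s_t$: either $s_t$ was produced by an inspection at time $\tau-1$ via \eqref{eq:o_2}, or by a repair at time $\tau-1$ giving $m_\tau=0=s_t$, or by the initialization $o_1=0$, $m_1=0$. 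The same bookkeeping shows $u_\tau=u_{\tau+1}=\cdots=u_{t-1}=1$, i.e.\ there are exactly $z_t$ consecutive ``do nothing'' actions between the last informative time and $t$.

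Next I would note that the observations over that window carry no information about the state: by \eqref{eq:o_1}, under $u=1$ the observation equals $\B$ with probability one regardless of the underlying state, so conditioning on $o_{\tau+1:t}=(\B,\ldots,\B)$ (which is part of the conditioning event) does not change the conditional law of $(m_{\tau+1},\ldots,m_t,m_{t+1})$ given $m_\tau=s_t$ and the actions. By Theorem~\ref{thm:1}, the process $(m_t)$ evolves as a Markov chain with one-step transition matrix $\TM$ under any action in $\{1,2\}$; propagating $m_\tau=s_t$ through the $z_t$ steps under action $1$ and one further step under action $2$ therefore gives $\TM^{1+z_t}(y,s_t)$, as claimed. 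Collecting the three cases, and noting that at most one of the indicator conditions $\{u_t=1,o_{t+1}=\B\}$, $\{u_t=2\}$, $\{u_t=3,o_{t+1}=0\}$ can be satisfied (outcomes violating all of them having probability zero), yields \eqref{eq:prob_o}; since none of the transition or observation rules involves $g$, the equality holds irrespective of the strategy.

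The main obstacle I anticipate is the bookkeeping in the $u_t=2$ case: making rigorous the claim that conditioning on the full statistic $(s_{1:t},z_{1:t},o_{1:t},u_{1:t})$ — in particular on the string of blank observations — reduces to propagating a single Markov trajectory from $m_\tau=s_t$, and folding the initialization and post-repair ``resets'' into the same $\TM^{1+z_t}$ formula. Everything else is just reading off the deterministic observation rules in \eqref{eq:o_1}--\eqref{eq:o_2} and \eqref{eq:relation_prob} and invoking the Markov property established in Theorem~\ref{thm:1}.
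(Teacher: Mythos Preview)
Your proposal is correct and follows essentially the same approach as the paper: the paper's proof is a one-line appeal to equations~\eqref{eq:o_1}, \eqref{eq:o_2}, \eqref{eq:relation_prob}, \eqref{eq:tm} and the Chapman--Kolmogorov equation, and your case analysis on $u_t$ together with the Markov propagation $\TM^{1+z_t}$ from the last informative time is precisely what that one-liner unpacks to. Your version simply supplies the details the paper omits.
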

\begin{proof}
The proof follows from  equations~\eqref{eq:o_1},~\eqref{eq:o_2},~\eqref{eq:relation_prob} and~\eqref{eq:tm}, and the Chapman–Kolmogorov equation.
\end{proof}

\begin{lemma}
Given any realization $s_{1:t}$, $z_{1;t}$, $o_{1:t}$ and $u_{1:t}$, $t \in \mathbb{N}$, there exists a function $\hat c$ such that
\begin{equation}
\Exp{c(m_t,u_t)| s_{1:t},z_{1:t},o_{1:t}, u_{1:t}}=\hat c(s_t,z_t,u_t),
\end{equation}
where
\begin{equation}\label{eq:hat_c}
\hat c(s_t,z_t,u_t):=\sum_{m \in \mathcal{M}} c(m,u_t) \TM^{z_t}(m,s_t).
\end{equation}
\end{lemma}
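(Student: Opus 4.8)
The plan is to reduce the claim to a statement about the conditional law of the current number of faulty components, namely
\begin{equation}\label{eq:m-cond-law}
\Prob{m_t=m \,|\, s_{1:t},z_{1:t},o_{1:t},u_{1:t}} = \TM^{z_t}(m,s_t), \qquad m \in \mathcal{M},\ t \in \mathbb{N}.
\end{equation}
Granting \eqref{eq:m-cond-law}, the lemma follows at once: since $u_t$ is among the conditioning variables and $c(m_t,u_t)$ depends measurably only on $m_t$ and $u_t$, the definition of conditional expectation gives
\[
\Exp{c(m_t,u_t) \,|\, s_{1:t},z_{1:t},o_{1:t},u_{1:t}} = \sum_{m \in \mathcal{M}} c(m,u_t)\, \TM^{z_t}(m,s_t),
\]
which is exactly $\hat c(s_t,z_t,u_t)$ as defined in \eqref{eq:hat_c}, so one may take $\hat c$ to be this map. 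Thus the only thing to prove is \eqref{eq:m-cond-law}.

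I would establish \eqref{eq:m-cond-law} by induction on $t$, using Lemma~\ref{lemma:hat-f} to drive the recursion. For $t=1$, the initialization $o_1=0$ and the assumption that the system starts with no faulty component give $m_1=0=s_1$ almost surely and $z_1=0$, so both sides of \eqref{eq:m-cond-law} equal $\ID{m=0}=\TM^0(m,s_1)$. For the inductive step, assume \eqref{eq:m-cond-law} at time $t$ and observe that adjoining $(s_{t+1},z_{t+1})$ to the conditioning adds nothing, since by Lemma~\ref{lemma:hat-f} it is a deterministic function of $(s_t,z_t,u_t,o_{t+1})$, and the subsequent action $u_{t+1}$ does not affect $m_{t+1}$. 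Now split on $u_t$ according to Lemma~\ref{lemma:hat-f}. If $u_t=1$, then $o_{t+1}=\B$ by \eqref{eq:o_1} and hence carries no information, while Theorem~\ref{thm:1} shows $m_{t+1}$ is obtained from $m_t$ through one application of $\TM$; combining this with the induction hypothesis and the Chapman--Kolmogorov identity (as in Lemma~\ref{lemma:prob_o}) yields $\Prob{m_{t+1}=m\,|\,\cdots}=(\TM\,\TM^{z_t})(m,s_t)=\TM^{1+z_t}(m,s_t)=\TM^{z_{t+1}}(m,s_{t+1})$. If $u_t=2$, then \eqref{eq:o_2} forces $m_{t+1}=o_{t+1}$ almost surely, so the conditional law is $\ID{m=o_{t+1}}=\TM^0(m,o_{t+1})=\TM^{z_{t+1}}(m,s_{t+1})$. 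If $u_t=3$, then \eqref{eq:relation_prob} forces $m_{t+1}=0$ almost surely, so the conditional law is $\ID{m=0}=\TM^0(m,0)=\TM^{z_{t+1}}(m,s_{t+1})$. This closes the induction and proves \eqref{eq:m-cond-law}.

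An alternative, non-inductive route is to set $\tau:=t-z_t$, the time of the most recent non-blank observation (or the initial time), and argue directly: by the definitions of $s_t,z_t$ and Lemma~\ref{lemma:hat-f}, the actions $u_\tau,\dots,u_{t-1}$ must all equal $1$ (an inspection or repair in this window would create a more recent reset, contradicting the value of $z_t$), so the observations $o_{\tau+1},\dots,o_t$ are blank and uninformative by \eqref{eq:o_1}, and by Theorem~\ref{thm:1} the chain $m_\tau,\dots,m_t$ propagates under $\TM$; moreover $m_\tau=s_t$ almost surely (by \eqref{eq:o_2} if the reset at $\tau$ was an inspection, by \eqref{eq:relation_prob} if it was a repair, and by the initialization if $\tau=1$), so Chapman--Kolmogorov over the $z_t$ intervening transitions gives \eqref{eq:m-cond-law}. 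I expect the main obstacle to lie precisely in this bookkeeping — verifying rigorously, from the definitions of $s_t$ and $z_t$, that every action strictly between the last reset and time $t$ is necessarily ``do nothing'', and that this is exactly what makes the earlier observations and actions drop out of the conditioning; the substantive probabilistic content in either route is only the Markov property of $m_t$ from Theorem~\ref{thm:1} together with the Chapman--Kolmogorov equation.
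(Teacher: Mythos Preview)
Your proposal is correct and follows essentially the same approach as the paper, which gives only a one-line sketch invoking the definition of conditional expectation, the information state $(s_t,z_t)$, the update map $\hat f$ of Lemma~\ref{lemma:hat-f}, and the Chapman--Kolmogorov equation. You have simply carried out in full the argument that the paper leaves implicit, and your reduction to the conditional law \eqref{eq:m-cond-law} together with the case split on $u_t$ is exactly the content hidden behind those four ingredients.
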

\begin{proof}
The proof follows from the definition of expectation operator,  states $(s_t,z_t)$,  update function $\hat f$ in Lemma~\ref{lemma:hat-f}, and  the Chapman–Kolmogorov equation.
\end{proof}

\begin{theorem}
For any $ s\in \mathcal{M}$ and $z \in \mathbb{N} \cup \{0\}$, define the following Bellman equation
\begin{equation}\label{eq:bellman-optimal}
V(s,z)=\min_{u \in \mathcal{U}} (\hat c(s,z,u) + \beta \Exp{V(\hat f(s,z,u,o))} ),
\end{equation}
where   the expectation is taken over observations $ o \in \mathcal{O}$ with respect to the conditional probability function in~\eqref{eq:prob_o}. The optimal strategy for the cost function~\eqref{eq:total_cost} is obtained by solving the above equation.
\end{theorem}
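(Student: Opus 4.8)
The plan is to establish that the pair $(s_t,z_t)$ is an \emph{information state} for the partially observed problem~\eqref{eq:total_cost}, and then to apply the standard theory of discounted Markov decision processes. The preceding three lemmas already supply the ingredients of an information state: Lemma~\ref{lemma:hat-f} gives a recursive update $\hat f$ that does not reference the full history; equation~\eqref{eq:hat_c}, together with the lemma stating $\Exp{c(m_t,u_t)\mid s_{1:t},z_{1:t},o_{1:t},u_{1:t}}=\hat c(s_t,z_t,u_t)$, shows the one-stage expected cost is a function of $(s_t,z_t,u_t)$ alone; and Lemma~\ref{lemma:prob_o} shows the conditional law of the next observation depends on the history only through $(s_t,z_t,u_t)$. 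What remains is to assemble these into a controlled Markov chain on the state $(s,z)$ and to argue that compressing the data $(o_{1:t},u_{1:t})$ to $(s_t,z_t)$ loses nothing.

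First I would verify the controlled-Markov property of $(s_t,z_t)$. Since $(s_{t+1},z_{t+1})=\hat f(s_t,z_t,u_t,o_{t+1})$ is a deterministic function of $(s_t,z_t)$, of $u_t$, and of $o_{t+1}$, and since by Lemma~\ref{lemma:prob_o} the conditional distribution of $o_{t+1}$ given $(s_{1:t},z_{1:t},o_{1:t},u_{1:t})$ is the right-hand side of~\eqref{eq:prob_o}, which depends only on $(s_t,z_t,u_t)$, marginalizing out $o_{t+1}$ gives
\[
\Prob{s_{t+1},z_{t+1}\mid s_{1:t},z_{1:t},o_{1:t},u_{1:t}}=\Prob{s_{t+1},z_{t+1}\mid s_t,z_t,u_t}.
\]
Combined with the cost identity, this shows that~\eqref{eq:total_cost} is equivalent to a fully observed discounted MDP with state $(s,z)\in\mathcal{M}\times(\mathbb{N}\cup\{0\})$, action $u\in\mathcal{U}$, deterministic initial state $(s_1,z_1)=(0,0)$ (recall $o_1=0$), transition kernel induced by $\hat f$ and~\eqref{eq:prob_o}, and one-stage cost $\hat c$. (Equivalently, $(s_t,z_t)$ determines the belief state, since by Theorem~\ref{thm:1} and the Chapman--Kolmogorov equation the belief $\Prob{m_t\mid o_{1:t},u_{1:t}}$ equals the distribution $\TM^{z_t}(\cdot,s_t)$, which provides an alternative route via the classical POMDP information state.) The reduction to strategies measurable with respect to $(s_t,z_t)$ is the standard information-state argument and can be carried out by the dynamic-programming induction used in~\cite{JalalACC2018,arabneydi2016new}, using exactly $\hat f$ and the two history-free identities above.

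It remains to justify the Bellman equation~\eqref{eq:bellman-optimal} for this MDP. As $\mathcal{M}$ and $\mathcal{U}$ are finite, $c$ and hence $\hat c$ are bounded; with $\beta\in(0,1)$ the operator $(\mathcal{T}V)(s,z):=\min_{u\in\mathcal{U}}\big(\hat c(s,z,u)+\beta\,\Exp{V(\hat f(s,z,u,o))}\big)$, the expectation being taken under~\eqref{eq:prob_o}, is a $\beta$-contraction on the Banach space of bounded functions on $\mathcal{M}\times(\mathbb{N}\cup\{0\})$ equipped with the supremum norm. By Banach's fixed-point theorem it has a unique bounded solution $V$, which equals the optimal value function of the MDP; since $\mathcal{U}$ is finite the minimum in~\eqref{eq:bellman-optimal} is attained, and by the usual verification argument any stationary policy that selects a minimizing $u$ at each $(s,z)$ is optimal. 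Composing such a policy with the update $\hat f$ produces an optimal strategy $g$ for~\eqref{eq:total_cost}.

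The step I expect to be the main obstacle is not the contraction argument, which is routine, but the rigorous proof that replacing the full history by $(s_t,z_t)$ is without loss of optimality, i.e.\ that the infimum of $J(g)$ over all history-dependent strategies equals the infimum over strategies of the form $u_t=g(s_t,z_t)$. This hinges on showing that, under \emph{any} strategy, the conditional law of the future states $(s_\tau,z_\tau)_{\tau>t}$ and of the future costs depends on the past only through $(s_t,z_t)$, which is precisely where it matters that the right-hand sides of~\eqref{eq:prob_o} and~\eqref{eq:hat_c} carry no dependence on $g$. A secondary technical point is that the state space is countably infinite because $z_t$ is unbounded; this causes no difficulty for the discounted criterion since $\hat c$ is bounded and $\beta<1$, so no truncation of $z_t$ is needed.
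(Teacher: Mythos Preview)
Your proposal is correct and follows essentially the same route as the paper: show that $(s_t,z_t)$ is an information state (Markovian update via $\hat f$, history-free expected cost $\hat c$, and history-free observation law~\eqref{eq:prob_o}, none depending on $g$), then invoke standard discounted MDP theory. The paper's own proof is a three-sentence sketch that cites these same three ingredients and defers to~\cite{Bertsekas2012book}; your version is simply a more detailed execution of that sketch, including the contraction argument and the belief-state remark, neither of which the paper spells out.
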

\begin{proof}
The proof follows from the fact that $(s_t,z_t)$, $t \in \mathbb{N}$,  is an information state because it evolves in a Markovian manner under control action $u_t$ according to Lemma~\ref{lemma:hat-f}. In addition, the conditional  probability~\eqref{eq:prob_o} and expected cost~\eqref{eq:hat_c} do not depend on  strategy $g$, and can be represented in terms of state $(s_t,z_t)$ and action $u_t$, $t \in \mathbb{N}$. Thus, the proof is completed by using the  standard results from Markov decision theory~\cite{Bertsekas2012book}.
\end{proof}
Since the optimization of  the  Bellman equation~\eqref{eq:bellman-optimal} is carried out over a countable infinite set, it is computationally difficult to   solve it. As a result, we  are interested in a strategy which is sufficiently close to the optimal strategy and is tractable. To this end,  define the  following Bellman equation for any $k \in \mathbb{N}$, $s \in \mathcal{M}$ and $z \in \{0,1,\ldots,k\}$:
\begin{equation}\label{eq:bellman_k}
\hat V_k(s,z)=\min_{u \in \mathcal{U}}(\hat V^u_k(s,z)),
\end{equation}
where
\begin{align}
\hat V^1_k(s,z) \hspace{-.1cm}&:= \hspace{-.1cm} \hat c(s,z,1)  \hspace{-.1cm}+\hspace{-.1cm}  \beta \hspace{-.05cm}\left( \hspace{-.05cm} \hat V_k(s,1+z) \ID{z \hspace{-.1cm}<\hspace{-.1cm} k} \hspace{-.1cm}+ \hspace{-.1cm} \hat V_k(0,0) \ID{z=k} \hspace{-.1cm} \right) \nonumber \\
\hat V^2_k(s,z)&:= \hat c(s,z,2) + \beta \sum_{m \in \mathcal{M}}     \TM^{1+z}(m,s)    \hat V_k(m,0), \nonumber \\
\hat V^3_k(s,z)&:= \hat c(s,z,3) + \beta \hat V_k(0,0).
\end{align}

Let $c_{\text{max}}$ denote an upper bound on the per-step cost and $J^\ast $ denote the cost under the optimal strategy.
\begin{theorem}\label{thm:app}
Given $\varepsilon \in \mathbb{R}_{>0}$, choose  a sufficiently large $k \in \mathbb{N}$ such that
$k \geq   \log\left( \frac{(1-\beta)\varepsilon}{2c_{\text{max}}}\right) / \log{\beta}.$
An  $\varepsilon$-optimal strategy can then  be obtained by  solving the  Bellman equation~\eqref{eq:bellman_k} as follows:
\begin{equation}\label{eq:proposed-strategy}
g^\ast_\varepsilon(s,z):=\begin{cases}
1, & \hat V^1_k(s,z) = \hat V_k(s,z),\\ 
2, & \hat V^2_k(s,z) =\hat  V_k(s,z),\\ 
3, & \hat V^3_k(s,z) = \hat V_k(s,z),
\end{cases}
\end{equation}
where $|J^\ast - J(g^\ast_\varepsilon)| \leq \varepsilon$.
\end{theorem}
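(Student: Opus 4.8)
The plan is to realize the truncated Bellman equation~\eqref{eq:bellman_k} as the optimality equation of a genuine finite-state discounted MDP $\mathcal{P}_k$, and then to bound the gap between $\mathcal{P}_k$ and the original problem by a coupling argument that exploits the fact that the truncation can only affect states unreachable in fewer than $k$ steps. Two elementary facts feed into this. First, since $|c(m,u)|\le c_{\text{max}}$, the value function of the reduced problem of~\eqref{eq:bellman-optimal} and every achievable cost $J(g)$ lie in $[-c_{\text{max}}/(1-\beta),\,c_{\text{max}}/(1-\beta)]$, and for any horizon $T$ the absolute value of the discounted tail $\mathbb{E}[\sum_{t>T}\beta^{t-1}c(m_t,u_t)]$ is at most $\beta^{T}c_{\text{max}}/(1-\beta)$. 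Second, $o_1=0$ forces $s_1=0$ and $z_1=0$, and Lemma~\ref{lemma:hat-f} gives $z_{t+1}\le 1+z_t$, so $z_t\le t-1$; hence the elapsed time can first equal $k$ only at times $t\ge k+1$.

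Next I would define $\mathcal{P}_k$ on the finite state space $\mathcal{M}\times\{0,\dots,k\}$, inheriting the stage cost $\hat c$ and the observation law~\eqref{eq:prob_o} from the reduced model, with the single modification that applying the do-nothing action at elapsed time $z=k$ sends the next state to $(0,0)$ instead of advancing to elapsed time $k+1$; equation~\eqref{eq:bellman_k} is exactly the Bellman equation of this $\mathcal{P}_k$, so $\hat V_k$ is its unique fixed point and $g^\ast_\varepsilon$ from~\eqref{eq:proposed-strategy} is an optimal stationary policy, with optimal cost $\hat J_k:=\hat V_k(0,0)$ from the initial state. Any strategy that chooses $u_t$ as a function of the capped statistic $(s_t,\min(z_t,k))$ can be run both in $\mathcal{P}_k$ and in the original (countably infinite) model, and the core step is the coupling claim: under any such strategy the realized state/action/cost trajectories in $\mathcal{P}_k$ and in the original model agree up to and including the first time the do-nothing action is applied at $z=k$, which by the second fact above is no earlier than $t=k+1$; consequently the two infinite-horizon discounted costs differ by at most $\beta^{k}c_{\text{max}}/(1-\beta)$.

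I would then apply the coupling claim twice. Using the reduced Bellman equation~\eqref{eq:bellman-optimal} — which lets us take an optimal strategy of the original problem to depend only on $(s_t,z_t)$ and hence to project onto $\mathcal{P}_k$ — and, conversely, lifting an optimal policy of $\mathcal{P}_k$ back to the original model, the coupling claim yields $|J^\ast-\hat J_k|\le\beta^{k}c_{\text{max}}/(1-\beta)$. Applying it to $g^\ast_\varepsilon$ itself (executed in the original model through the capped statistic) gives $|J(g^\ast_\varepsilon)-\hat J_k|\le\beta^{k}c_{\text{max}}/(1-\beta)$. By the triangle inequality $|J^\ast-J(g^\ast_\varepsilon)|\le 2\beta^{k}c_{\text{max}}/(1-\beta)$, and the stated lower bound on $k$ is precisely the inequality $2\beta^{k}c_{\text{max}}/(1-\beta)\le\varepsilon$, obtained by taking logarithms and dividing by $\log\beta<0$ (which reverses the direction).

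The main obstacle is the coupling claim, where care is needed to (i) spell out that the strategy~\eqref{eq:proposed-strategy} is executed in the infinite model via the capped statistic $(s_t,\min(z_t,k))$, (ii) verify that the observation kernels of $\mathcal{P}_k$ and the original model genuinely coincide on every state visited before the truncation is triggered, and (iii) confirm that the discrepancy is confined to the $\beta^{k}$-discounted tail; everything else reduces to standard facts about discounted MDPs and the results already established above.
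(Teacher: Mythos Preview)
Your proposal is correct and follows essentially the same two-step approach as the paper's sketch: construct a finite-state approximate MDP on $\mathcal{M}\times\{0,\dots,k\}$ whose Bellman equation is~\eqref{eq:bellman_k}, then bound the cost discrepancy by $2\beta^{k}c_{\text{max}}/(1-\beta)$ via a tail argument. You have simply supplied the details (the coupling, the reachability bound $z_t\le t-1$, and the triangle-inequality split through $\hat J_k$) that the paper omits due to space.
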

\begin{proof}
Due to space limitations, only a sketch of the proof is provided, which consists of two steps. In  the first step, an approximate Markov decision process with state space $\mathcal{M} \times (0,1,\ldots,k)$ and action space $\mathcal U$ is constructed in such a way that it complies with the dynamics and cost of the original model. The optimal solution of the  approximate model is  obtained from the Bellman equation~\eqref{eq:bellman_k}. In the second step, it is shown that the difference between the optimal cost of the original  model  and that of the  approximate  model  is upper-bounded by $ \frac{2 \beta^k c_{\text{max}}}{1-\beta}$. If $\varepsilon \leq  \frac{2 \beta^k c_{\text{max}}}{1-\beta}$, then the solution of the approximate  model is an $\varepsilon$-optimal solution for the original model.
\end{proof}

According  to the strategy proposed in Theorem~\ref{thm:app} (described by~\eqref{eq:proposed-strategy}),  the near-optimal action at  any time $t \in \mathbb{N}$  depends on the latest observation of the number of faulty components by that time ($s_t$) and the elapsed time since then $(z_t)$.  Note that the near-optimal action  changes sequentially  in time based on the dynamics of the state $(s_t,z_t)$, according to~Lemma~\ref{lemma:hat-f}.
\section{Simulations}\label{sec:examples}
In this section,   we  aim to verify  the main result  presented in the preceding section by simulations. Consider a computing platform consisting of $n$ processors. Let $m_t$ denote the number of faulty processors at time $t \in \mathbb{N}$ and $p$ be  the probability that a processor fails.   The per-step cost under action $u_t=1$ is expressed  as follows:
$c(m_t, 1)= \ell(m_t),$
where $\ell(m_t) \in \mathbb{R}_{\geq 0}$  is  the cost of  operating with  $m_t$ faulty processors.
The per-step cost under action $u_t=2$ is described as:
$c(m_t, 2)= \ell(m_t) + \alpha(m_t),$
where $\alpha(m_t) \in \mathbb{R}_{\geq 0}$ is the cost of inspecting the system  to detect the number of faulty processors. The per-step cost under action $u_t=3$ is formulated  as:
$c(m_t, 3)= \ell(m_t) + \gamma(m_t),$
where $\gamma(m_t) \in \mathbb{R}_{\geq 0}$ is the cost of repairing the faulty processors.
\\

 \textbf{Example 1. Inspection and repair with fixed price.} Let the cost of  inspection and repair be constant, i.e.,  they do not depend on the number of faulty components. We consider the following numerical parameters:
\begin{multline}
n=10, \quad p=0.02, \quad\beta=0.95, \quad  k=100,\\
\quad \ell(m)=m, \quad \alpha (m)=1, \quad \gamma(m)=60, \quad \forall m \in \mathcal{M}.
\end{multline}

 Figure~\ref{fig:flat}  shows the optimal course of action for the above setting, in different scenarios in terms of the number of faulty processors (based on the most recent observation). In this figure, the black color represents the first option  (continue operating without disruption), gray color  represents the second option (inspect the system and detect the number of faulty components) and the white color represents the third option (repair the faulty components). It is observed from the figure that  that the inspection and repair options  become more attractive as the number of faulty processors   and/or the elapsed time since the last observation  grow.
\begin{figure}
\center
\vspace{-0cm}
\scalebox{0.94}{
\includegraphics[width=\linewidth]{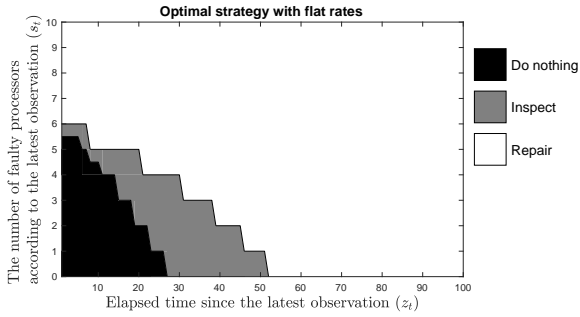}}
\vspace{-0cm}
\caption{The optimal strategy for Example 1,  where  the options are depicted in three different colors.}\label{fig:flat}
\end{figure}
\\

 \textbf{Example 2. Inspection and repair with variable price.} Now, let the cost of  inspection and repair be  variable.  We consider the same parameters as the previous  example, except the following ones:
\begin{equation}
\alpha (m)=0.1+0.09m,  \gamma(m)=30+3m, \quad  \forall m \in \mathcal{M}.
\end{equation}
The results are presented in Figure~\ref{fig:variable}, analogously to Figure~\ref{fig:flat}. The figure shows that   the  inspection option  is less desirable compared to Example 1, where the inspection and repair options prices were independent of the number of faulty processors. The reason is that  with the variable rate, the repair option becomes  more economical, hence more attractive than the previous case.
\begin{figure}[t!]
\center
\vspace{0cm}
\scalebox{0.94}{
\includegraphics[width=\linewidth]{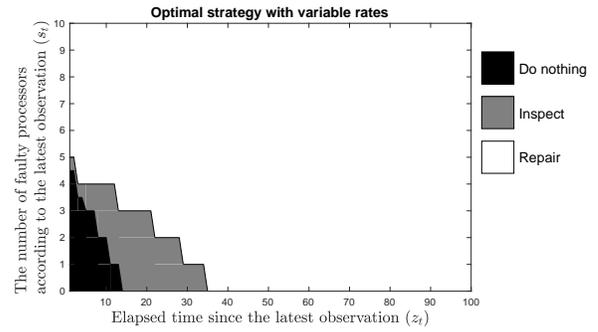}}
\vspace{0cm}
\caption{The optimal strategy for Example 2,  where the options are depicted in three different colors.}\label{fig:variable}
\end{figure}
\section{Conclusions}\label{sec:conclusions}
In this paper, we presented a fault-tolerant scheme for a system consisting of a number of homogeneous components, where each component can  fail at any time with a prescribed  probability. We proposed a near-optimal strategy to choose sequentially between three options: (1) do nothing and let the system operate with faulty components; (2) inspect to detect the number of faulty components, and (3) repair the faulty components. Each option incurs a cost that is incorporated in the overal cost function in the optimization problem. Two numerical examples are presented to demonstrate the results in the cases of fixed and variable rates.  As a future work, one can investigate the case where there are a sufficiently large number of components using  the  law of large numbers~\cite{JalalCDC2017}. 
\bibliographystyle{IEEEtran} 
\bibliography{Jalal_Ref}
\end{document}